\makeatletter\@addtoreset {equation}{section}\makeatother
\theoremstyle{plain}
\newtheorem{theorem}{Theorem}[section]
\newtheorem{corollary}[theorem]{Corollary}
\newtheorem{proposition}[theorem]{Proposition}
\begin{document}

\title{\bf Sudoku Symmetry Group}

\author{Vasiliy Osipov \\ {\small Department of
Mathematics, Far Eastern National University, Russia}\\ {\small
e-mail: vosipov@ext.dvgu.ru}}

\date{\today}
\maketitle

\begin{abstract}
The mathematical aspects of the popular logic game Sudoku
incorporate a significant number of the group theory concepts. In
this note, we describe all symmetric transformations of the Sudoku
grid. We do not intend to obtain a new strategy of solving Sudoku
and do not describe basic ideas of the game which can be found in
numerous other sources.
\end{abstract}

%%%%%%%%%%%%%%%%%%%%%%%%%%%%%%%%%%%%%%%%%%%%%%%%%%%%%%%%%%%%%%%%%%%%%%
\section{Symmetric transformations of the Sudoku grid}
%%%%%%%%%%%%%%%%%%%%%%%%%%%%%%%%%%%%%%%%%%%%%%%%%%%%%%%%%%%%%%%%%%%%%%
To define a Sudoku grid we will use the usual matrix notations, i.e:
\begin{enumerate}
    \item  $a_{i,j}$ - this is the grid cell with the row index $i$ and the
column index $j$.
    \item  The Sudoku $3 \times 3$ squares we will call blocks.
    \item  Three horizontal sectors (bands) are defined by the following sets of
rows: \linebreak \{ 1, 2, 3 \}, \{ 4, 5, 6 \} and \{ 7, 8, 9 \}.
    \item Three vertical sectors (stacks) are defined by the following sets of
columns: \linebreak \{ 1, 2, 3 \}, \{ 4, 5, 6 \} and \{ 7, 8, 9 \}.
\end{enumerate}

Let us fix some completely filled Sudoku grid. The natural question
arises: How many new Sudoku grids can be obtained from the fixed one
by different symmetric transformations? We will give a complete
description of all symmetric transformations and will show that any
symmetric transformation $S$ of the Sudoku grid can be represented
as some permutation of the rows in the horizontal sectors,
permutation of the horizontal sectors itself and matrix
transposition. We will prove that all other symmetric
transformations that are described by many authors  can be obtained
by some combination of the main three mentioned above.

We should introduce some notations:
\begin{enumerate}
    \item $A_1 = \{E\}$ where $E$ - is the identical transformation.
    \item $A_2 = \{d \}$ where $d$ - is the matrix transposition.
    \item $A_3 = \{r_i\}$ where index $i = \overline{1, 6^4 - 1}$ and  $r_i \neq E.$
\end{enumerate}

For any element from $A_3$ at least one of the following conditions
holds true: $r_i^2 = E$, $r_i^3 = E$, $r_i^6 = E$, $r_i^9 = E$,
$r_i^12 = E.$ Where $r_i$ is the permutation of the rows in some
horizontal sector, permutation of sectors itself or both. Each $r_i$
is related to some substitution of the following type:

$$  r \rightarrow \left(%
\begin{array}{ccc}
  1\, 2 \, 3 & 4 \,5 \,6 & 7\, 8 \,9 \\
  \alpha_1 \alpha_2 \alpha_3 & \beta_1 \beta_2 \beta_3 & \gamma_1 \gamma_2 \gamma_3 \\
\end{array}%
\right).$$

Where each triplet $(\alpha_1 \alpha_2 \alpha_3)$, $(\beta_1 \beta_2
\beta_3)$, $(\gamma_1 \gamma_2 \gamma_3)$ this is $(1, 2, 3)$ or
$(4, 5, 6)$ or $(7, 8, 9)$ i.e these are triplet's permutations. The
total number of them is $6$. The total number of the permutations
inside the triplets equals $6^3$. Therefore the total number of all
possible $r_i$ is equal to $(6^4 - 1)$ (we do not take into account
the identical one). These permutations define a subgroup in $S_9$
(if we add the identical one of course).

\begin{enumerate}
    \setcounter{enumi}{3}
    \item $A_4 = \{ r_k d\}$ where $r_k \neq E$ and $k = \overline{1, 6^4 - 1}.$
    \item $A_5 = \{ d r_l \}$ where  $r_l \neq E$ and $l = \overline{1, 6^4 - 1}.$
    \item $A_6 = \{ d r_j d \}$ where $r_j \neq E$ and $j = \overline{1, 6^4 - 1}.$
\end{enumerate}

Where $A_6$ is a permutation of columns in a vertical sector, or
permutation of vertical sectors itself, or both at the same time.

\begin{enumerate}
    \setcounter{enumi}{6}
    \item $A_7 = \{ r_s d r_t \}$ where  $r_s \neq E, \quad$ $r_t \neq E$ and  $s,t =
\overline{1, 6^4 - 1}.$
    \item $A_8 = \{ r_\lambda d r_\mu d \}$ where $r_\lambda \neq E, \quad$ $r_\mu
\neq E$ and $\lambda, \mu = \overline{1, 6^4 - 1}.$
\end{enumerate}

Where $A_8$ is a set of permutations rows and columns inside sectors
and permutations of sectors itself.

Desired group of symmetries $S = \bigcup\limits_{i = 1}^{8} A_i$. To
obtain a complete description of group $S$ we will need the
following relation.
\begin{proposition}
\label{prop1}
$$ r_{\lambda} (d r_\mu d) = (d r_\mu d) r_\lambda, \quad \forall \lambda, \forall \mu (*) $$
\end{proposition}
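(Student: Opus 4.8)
The plan is to make everything act on an arbitrary completely filled grid $M=(a_{i,j})$ and to exploit the fact that a transformation of type $r$ only rearranges \emph{row} indices, whereas a transformation of type $dr d$ only rearranges \emph{column} indices; since these touch independent coordinates, they must commute, and in matrix language this is nothing but associativity together with the fact that left and right multiplication always commute.

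First I would fix notation. To every $r\in A_3\cup\{E\}$ associate the permutation $\sigma=\sigma_r\in S_9$ of the row indices given by the displayed substitution, so that $r$ sends $M$ to the grid $r(M)$ with $\bigl(r(M)\bigr)_{i,j}=a_{\sigma(i),j}$; equivalently $r(M)=P_\sigma M$, where $P_\sigma$ is the associated $9\times 9$ permutation matrix acting on the left. Next I would record the effect of conjugating by $d$. Since $d(M)=M^{\mathsf T}$, a one-line computation gives
$$(d\,r_\mu\,d)(M)=\bigl(P_{\sigma_\mu}M^{\mathsf T}\bigr)^{\mathsf T}=M\,P_{\sigma_\mu}^{\mathsf T},\qquad\text{i.e.}\qquad \bigl((d\,r_\mu\,d)(M)\bigr)_{i,j}=a_{i,\sigma_\mu(j)} .$$
Thus $d\,r_\mu\,d$ is precisely a column (stack) permutation, realized as right multiplication by the permutation matrix $P_{\sigma_\mu}^{\mathsf T}$; this incidentally re-derives the parenthetical remark in the text describing $A_6$.

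With these two formulas in hand, relation $(*)$ is immediate on either side. Applying $r_\lambda$ and then $d\,r_\mu\,d$ to $M$ produces the grid whose $(i,j)$ entry is $a_{\sigma_\lambda(i),\,\sigma_\mu(j)}$, and applying the two transformations in the opposite order produces a grid with the \emph{same} entry $a_{\sigma_\lambda(i),\,\sigma_\mu(j)}$ in cell $(i,j)$. In matrix form, both compositions equal $P_{\sigma_\lambda}\,M\,P_{\sigma_\mu}^{\mathsf T}$, and equality holds by associativity of matrix multiplication. Since $M$ was an arbitrary filled grid (and the argument used nothing about the band/stack structure of $\sigma_\lambda,\sigma_\mu$, only that one is a row operation and the other a column operation), we conclude $r_\lambda(d\,r_\mu\,d)=(d\,r_\mu\,d)r_\lambda$ as transformations, for all $\lambda,\mu$.

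I do not expect a genuine obstacle here; the only point requiring care is the bookkeeping of the action convention — whether a given transformation is applied as left or as right multiplication, and in which order compositions are read — so that the conjugate $d\,r_\mu\,d$ is correctly identified as right multiplication by a permutation matrix and not as some left action. Once that convention is pinned down, the proposition reduces to the elementary fact that a left multiplication and a right multiplication on a matrix always commute.
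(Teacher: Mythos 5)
Your proof is correct and is essentially the paper's own argument: the paper chases a generic cell through both compositions, $a_{ij}\rightarrow a_{mn}$ either way, which is exactly your entrywise identity $a_{\sigma_\lambda(i),\sigma_\mu(j)}$ showing that $r_\lambda$ touches only the row index while $d\,r_\mu\,d$ touches only the column index. Your permutation-matrix formulation ($P_{\sigma_\lambda}M P_{\sigma_\mu}^{\mathsf T}$ and associativity) is just a tidier packaging of that same index bookkeeping, with the action convention made explicit.
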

\begin{proof}
$$  a_{ij} (r_\lambda)\rightarrow a_{mj} (d)\rightarrow a_{jm} (r_\mu)\rightarrow a_{nm} (d)\rightarrow a_{mn}$$
$$  a_{ij} (d)\rightarrow a_{ji} (r_\mu)\rightarrow a_{ni} (d)\rightarrow a_{in} (r_\lambda)\rightarrow a_{mn}$$
\end{proof}

\newpage
The next set of relations easy to obtain from the \ref{prop1}.
\begin{corollary} $ \,$
\begin{itemize}
    \item $A_i \cap A_j = \emptyset \quad $ if $\quad i \neq j.$
    \item $A_i \cdot A_j \,$ belongs to the union $\, \forall i, \,\, \forall j \,$ sets
from $\,\{ A_i\} \,$ where $\,i = \overline{1,8}$.
\end{itemize}
( for example: $ A_7 \cdot A_8 \subset A_7 \cup A_2 \cup A_4 \cup
A_5$)
\end{corollary}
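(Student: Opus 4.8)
The plan is to put every element of $S$ into a canonical form and then read off both bullets from that form. Write $R=A_1\cup A_3$, which the text above notes is a subgroup of $S_9$ of order $6^4$ (the band/row permutations), and $R'=dRd=A_1\cup A_6$, the conjugate subgroup of stack/column permutations. The three facts that drive everything are: $d^2=E$; by Proposition~\ref{prop1} every element of $R$ commutes with every element of $R'$; and $R\cap R'=\{E\}$ while $d\notin RR'$. The last point is purely about the shape of the maps: a member of $R$ acts by $a_{ij}\mapsto a_{\rho(i),j}$, a member of $R'$ by $a_{ij}\mapsto a_{i,\sigma(j)}$, and $d$ by $a_{ij}\mapsto a_{ji}$, and such maps coincide only in the stated trivial cases. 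It follows that $RR'=R\times R'$, that the cosets $RR'$ and $(RR')\,d$ are disjoint, and that
$$ g \;=\; r\cdot(d\,r''\,d)\cdot d^{\,e},\qquad r,r''\in R,\ \ e\in\{0,1\}, $$
is a \emph{unique} expression for each $g\in S$.

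The second step is to observe that, in this normal form, the eight sets $A_1,\dots,A_8$ are exactly the eight strata cut out by the three flags ``$r=E$?'', ``$r''=E$?'', ``$e=0$?''. For instance $A_3=\{r\neq E,\ r''=E,\ e=0\}$, $A_6=\{r=E,\ r''\neq E,\ e=0\}$, $A_8=\{r\neq E,\ r''\neq E,\ e=0\}$, while — rewriting $d r_l=(d r_l d)\,d$ and $r_s d r_t=r_s(d r_t d)\,d$ — the sets $A_2,A_4,A_5,A_7$ are the four strata with $e=1$, distinguished by the other two flags. Since the normal form is unique, the strata are pairwise disjoint; this is the first bullet.

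The third step is the multiplication law. From $d^2=E$ and the commutation of $R$ with $R'$ one gets the left-multiplication rule $d\cdot\bigl(r(d r'' d)d^{e}\bigr)=r''\,(d\,r\,d)\,d^{\,1+e}$, and consequently
$$ \bigl(r_1(d r_1'' d)d^{e_1}\bigr)\bigl(r_2(d r_2'' d)d^{e_2}\bigr)= \begin{cases} (r_1 r_2)\,\bigl(d\,r_1'' r_2''\,d\bigr)\,d^{\,e_2}, & e_1=0,\\[3pt] (r_1 r_2'')\,\bigl(d\,r_1'' r_2\,d\bigr)\,d^{\,1+e_2}, & e_1=1. \end{cases} $$
In both cases the result is already in normal form, hence lies in the single $A_k$ selected by its three flags. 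Letting the two factors range over $A_i$ and $A_j$ and listing which flag-combinations can arise then yields the inclusion $A_i\cdot A_j\subset\bigcup_k A_k$ over that short list of indices. For the displayed example: an element of $A_7$ has flags $(r_s,r_t,1)$ with $r_s,r_t\neq E$ and one of $A_8$ has $(r_\lambda,r_\mu,0)$ with $r_\lambda,r_\mu\neq E$, so by the $e_1=1$ line the product has flags $(r_s r_\mu,\ r_t r_\lambda,\ 1)$; as $r_s r_\mu$ and $r_t r_\lambda$ independently run over all of $R$ (each can be $E$ or not), the product lies in $A_2\cup A_4\cup A_5\cup A_7$, precisely as claimed.

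The one place that genuinely needs care is the claim $R\cap R'=\{E\}$ and $d\notin RR'$: this is what makes the normal form unique, hence what makes the identification ``$A_i$ = stratum'' — and thus the disjointness — correct; everything after that is finite bookkeeping powered by Proposition~\ref{prop1} and $d^2=E$. It is worth noting as a by-product that $\bigcup_i A_i$ is thereby closed under products and, since inversion merely permutes the $A_i$ (swapping $A_4\leftrightarrow A_5$ and fixing the rest), closed under inverses; so it is a subgroup, which is what licenses calling $S$ the symmetry group.
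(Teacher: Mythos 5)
Your proof is correct, but it takes a more systematic route than the paper does. The paper's own proof only works out the sample inclusion $A_7\cdot A_8\subset A_2\cup A_4\cup A_5\cup A_7$: using Proposition~\ref{prop1} and $d^2=E$ it reduces $r_s d r_t\,r_\lambda d r_\mu d$ to $(r_s r_\mu)\,d\,(r_t r_\lambda)$ and splits into four cases according to whether $r^*=r_t r_\lambda$ and $r^{**}=r_s r_\mu$ are trivial --- which is exactly the $e_1=1$ instance of your multiplication rule --- and then simply asserts that all other pairs $(i,j)$ follow ``by the same approach''; it gives no argument at all for the first bullet (pairwise disjointness). Your version packages the same two ingredients ($d^2=E$ and Proposition~\ref{prop1}) into the unique normal form $r\,(d\,r''\,d)\,d^{\,e}$, identifies each $A_k$ with one of the eight flag-strata, and derives one multiplication law that settles all $64$ products uniformly; uniqueness of the normal form then yields the disjointness statement, and you correctly isolate the point that actually carries the weight there, namely $R\cap R'=\{E\}$ and $d\notin RR'$ (a nontrivial row permutation is never a column permutation, and a transposing map is never a non-transposing one). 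What the paper's argument buys is brevity and a concrete worked example; what yours buys is completeness --- the disjointness bullet, which the paper later uses for the count $\overline{\overline{S}}=2\cdot 6^8$, is otherwise left unproved --- plus the structural by-product that $\bigcup_i A_i$ is closed under products and inverses, i.e.\ that $S$ really is a group, essentially $R\times R'$ extended by $\langle d\rangle$.
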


\begin{proof}

$$ a = r_s d r_t r_\lambda d r_\mu d = r_s (d r_t r_\lambda d) r_\mu d = (d r_t r_\lambda d) r_s r_\mu d = (d r^* d) r^{**} d = r^{**} d r^*$$
where $r^* = r_t r_\lambda, \quad r^{**} = r_sr_\mu.$
$$
\left\{%
\begin{array}{ll}
    \mbox{if} \quad r^* = E  \quad \mbox{and} \quad  r^{**} = E &  \quad \mbox{then}\quad  a \in A_2. \\
    \mbox{if} \quad r^* = E  \quad \mbox{and} \quad r^{**} \neq E  & \quad \mbox{then} \quad a \in A_4. \\
    \mbox{if} \quad r^* \neq E \quad \mbox{and} \quad r^{**} = E & \quad \mbox{then} \quad a \in A_5. \\
    \mbox{if} \quad r^* \neq E \quad \mbox{and} \quad r^{**} \neq E & \quad \mbox{then} \quad a \in A_7. \\
\end{array}%
\right.
$$

As a result

$$ A_7 \cdot A_8 \subset A_2 \cup A_4 \cup A_5 \cup A_7 $$

using the same approach one can show it for $\forall i, \forall j$
using the relation \ref{prop1}.
\end{proof}

It's easy to count that $$ \overline {\overline {S}} =
\sum_{i=1}^{8} \overline{\overline{A_i}};$$ where
$$ \overline{\overline{S}} = 2 + (6^4 -1) + (6^4 -1)+(6^4 -1)+(6^4 -1)+(6^4 -1)^2 + (6^4 -1)^2  = 2 \cdot 6^8. $$

If the add to the obtained group $S$ the group of interchange of the
digits (we will call this new group $O = \{o_{ij}\}; \quad i
=\overline{1, 9!}$). The group $O_i$ will be equivalent to the group
$S_9$ (9th order substitution). It's obvious that $O_i$ commutes
with any element of the group $S$ because $O_i$ commutes with $d$
and $r_i$.
$$\overline{\overline{O}} = 9!.$$

We obtain that $S \cdot O = O \cdot S$
$$ \overline{\overline{S \cdot O}} = 2 \cdot 6^8 \cdot 9!$$

The same results can be found in some other articles (for example in
the article of Royle Gordon). Let us finally show that the
constructed group $S$  exhausts all possible symmetries of the
Sudoku grid.

Permutations of the rows and columns inside the sectors,
permutations of horizontal and vertical sectors, transposition are
exhausted by sets $\{A_2\}$, $\{A_3\}$ and $\{A_6\}$. All the rest
symmetries can be obtained as a combination of some elements of the
group $S$.

\begin{enumerate}
    \item $H$ - is a symmetry with respect to the $5th$ row.
\end{enumerate}

$$ a_{ij} \rightarrow a_{10 -i, j} \quad \mbox{or} \quad H \leftrightarrows r_1$$
where
$$ r_1 = \left(%
\begin{array}{ccc}
  1\,2\,3 & 4\,5\,6 & 7\,8\,9 \\
  9\,8\,7 & 6\,5\,4 & 3\,2\,1 \\
\end{array}%
\right); \quad r_1^2 = E$$

We can identify $H = r_1, \quad r_1 \in A_3$. Elements of the
substitution $r_1$ with the numbers of the grid rows.

\begin{enumerate}
  \setcounter{enumi}{1}
    \item $H^1$ is a symmetry with respect to the $5th$ column.
$$ a_{ij} \rightarrow a_{i,10-j} \quad \mbox{or} \quad  H^1 = d r_1 d; \quad H^1 \in A_6. $$
    \item Symmetry with respect to the additional diagonal
$$ a_{ij} \rightarrow a_{10-i, 10 -j} \quad  \mbox{or} \quad D = r_1 d r_1; \quad D \in A_7. $$
    \item $V$ - rotation of rows to $\pi/2$ clockwise
$$ a_{ij} \rightarrow a_{j, 10 - i} \quad \mbox{or} \quad V = dr_1; \quad V \in A_5. $$
\end{enumerate}

Rotation of the rows defines rotation of the columns.

\begin{enumerate}
    \item $V^2$ - rotation of the rows to $\pi$.
    \item  $a_{i,j} \rightarrow a_{10-i, 10-j}$ or $V^2 = (dr_1)^2, \quad
V^2 \in A_8$.
    \item $V^3$ - rotation of the rows to $3/2 \, \pi$ clockwise.
    \item  $a_{i,j} \rightarrow a_{10-j, i}$ or $V^3 = r,d, \quad V^3 \in
A_4, \quad V^4 = E, \quad V^4 \in A_1$.
    \item  Rotation of the columns clockwise: - $W \quad$ $ W = r,d$ $W =
V^3,$ $W^2 = V^2$, $W^3 = V$, $W^4 = E$.
    \item $F$ - is a central symmetry with respect to the central element
$a_{55}$.
    \item $a_{ij} \rightarrow a_{10-i, 10-j}$ or $F = (dr_1)^2, \quad F \in
A_8$.
\end{enumerate}

It's now easy to see that $F = V^2$. Hence the group $S$ exhausts
all possible symmetries of the Sudoku grid.

There are more difficult transformations that can be defined on the
Sudoku grid. These transformations can not be reduced to the
symmetric ones but being applied they lead to the different Sudoku
grid. These more complicated transformations are related to the
questions of the unique and non-unique solvability of the particular
Sudoku grid and lots of questions are still open in this area of
research.

\end{document}